\renewenvironment{abstract}
{\small\vspace{-1em}
\begin{center}
\bfseries\abstractname\vspace{-.5em}\vspace{0pt}
\end{center}
\list{}{
\setlength{\leftmargin}{0.6in}%
\setlength{\rightmargin}{\leftmargin}}%
\item\relax}
{\endlist}
\declaretheorem[name=Theorem, numberwithin=section]{theorem}
\declaretheorem[name=Conjecture, sibling=theorem]{conjecture}
\declaretheorem[name=Claim, sibling=theorem]{claim}
\declaretheorem[name=Remark, style=remark, sibling=theorem]{remark}
\def\cqedsymbol{\ifmmode$\lrcorner$\else{\unskip\nobreak\hfil
\penalty50\hskip1em\null\nobreak\hfil$\lrcorner$
\parfillskip=0pt\finalhyphendemerits=0\endgraf}\fi}
\let\leq\leqslant
\let\geq\geqslant
\title{On cuts of small chromatic number in sparse graphs}
\author[1]{Guillaume Aubian}
\author[2]{Marthe Bonamy}
\author[2]{Romain Bourneuf}
\author[2]{Oscar Fontaine}
\author[,3]{Lucas Picasarri-Arrieta\footnote{Research supported by JST as part of ASPIRE, Grant Number JPMJAP2302.}}
\affil[1]{Université Paris-Panthéon-Assas, CRED Paris, France.}
\affil[2]{CNRS, LaBRI, Université de Bordeaux, Bordeaux, France.}
\affil[3]{National Institute of Informatics, Tokyo, Japan.}
\date{\today}
\begin{document}

\maketitle

\begin{abstract}
For a given integer $k$, let $\ell_k$ denote the supremum $\ell$ such that every sufficiently large graph $G$ with average degree less than $2\ell$ admits a separator $X \subseteq V(G)$ for which $\chi(G[X]) < k$. 
Motivated by the values of $\ell_1$, $\ell_2$ and $\ell_3$, a natural conjecture suggests that $\ell_k = k$ for all $k$. 
We prove that this conjecture fails dramatically: asymptotically, the trivial lower bound $\ell_k \geq \tfrac{k}{2}$ is tight. More precisely, we prove that for every $\varepsilon>0$ and all sufficiently large $k$, we have $\ell_k \leq (1+\varepsilon)\tfrac{k}{2}$.  
\end{abstract}


\section{Introduction}\label{sec:intro}

For a given integer $k$, we define $\ell_k$ as the supremum $\ell$ such that every sufficiently large\footnote{As a function of $k$ and $\ell$.} graph~$G$ with average degree less than $2\ell$ contains a set $X \subseteq V(G)$ with the properties that~$G \setminus X$ is disconnected and $\chi(G[X])<k$.

Observe first that any graph with average degree less than $k$ and order at least~$k+1$ contains a vertex of degree at most $k-1$, whose neighbourhood is thus a separator of chromatic number less than $k$.
Conversely, for any $n \geq k$, one can construct an $n$-vertex graph with no such set~$X$ by taking a clique on $k$ vertices and joining it completely to an independent set of size $n-k$, see~\autoref{fig:ell_leq_k} for an illustration. This graph has exactly $kn - \tfrac{k(k+1)}{2}$ edges and hence average degree less than $2k$, yet every separator must include the entire clique, which has chromatic number $k$. 
This shows that $\ell_k$ is well-defined and satisfies
\[
\frac{k}{2} \;\leq\; \ell_k \;\leq\; k.
\]

\begin{figure}
    \centering
    \begin{tikzpicture}
      \usetikzlibrary{decorations.pathreplacing} 
      \tikzset{vertex/.style = {circle, black!80, fill, inner sep=1.5pt, outer sep=1pt}}

      \foreach \i in {1,...,5} {
        \node[vertex] (c\i) at (18+72*\i:1) {};
      }
      
      \node[vertex] (s1) at (3.5,0.6) {};
      \node[draw=none,fill=none,inner sep=0pt] (dots1) at (3.5,0.2) {$\vdots$};
      \node[vertex] (s2) at (3.5,-0.4) {};

      \foreach \i in {1,...,5}
        \foreach \j in {s1,s2}{
          \draw[line width=1.6pt, white] (c\i) -- (\j);
          \draw[gray] (c\i) -- (\j);
          }
          
      \foreach \i in {1,...,4}{
        \pgfmathtruncatemacro{\j}{\i + 1}
        \foreach \k in {\j,...,5}{
          \draw[line width=2pt, white] (c\i) -- (c\k);
          \draw[semithick] (c\i) -- (c\k);
          }
      }

      \draw[decorate,decoration={brace,amplitude=5pt,mirror}] 
        (-1.3,1.2) -- (-1.3,-1) 
        node[midway,xshift=-0.8cm, draw=none, fill=none] {$k$};

      \draw[decorate,decoration={brace,amplitude=5pt}] 
        (3.9,0.8) -- (3.9,-0.6) 
        node[midway,xshift=1.1cm, draw=none, fill=none] {$n-k$};
    \end{tikzpicture}

    \caption{An $n$-vertex graph of average degree less than $2k$ in which every separator has chromatic number at least $k$.}
    \label{fig:ell_leq_k}
\end{figure}
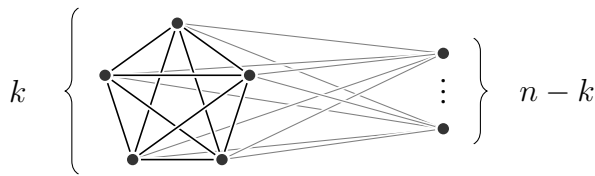

Having established general bounds, let us now consider the small cases. It is folklore that~$\ell_1 = 1$: indeed, whenever $n \geq 2$, every $n$-vertex graph with fewer than $n-1$ edges is disconnected, while connected graphs with average degree $2$ certainly exist (e.g. cycles).

The case $k=2$ was resolved by Chen and Yu \cite{CY02}, confirming a conjecture of Caro with a very elegant inductive proof.

\begin{theorem}[\cite{CY02}]\label{th:k=2}
Any graph on $n$ vertices with fewer than $2n-3$ edges admits a stable cut, while some graphs with exactly $2n-3$ edges do not. In particular, $\ell_2=2$.
\end{theorem}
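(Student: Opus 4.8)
\medskip
\noindent\emph{Proof proposal.}
The second assertion is the construction from the introduction specialised to $k=2$: the graph obtained from an edge $ab$ by joining it completely to an independent set of size $n-2$ has exactly $2n-3$ edges, and every separator contains $\{a,b\}$ (each of $a,b$ is adjacent to all remaining vertices), hence it has no stable cutset. For the first assertion the plan is induction on $n$, proving that every graph $G$ with $e(G)\le 2n-4$ has a stable cutset. If $G$ is disconnected take $X=\varnothing$; if $G$ has a cutvertex $v$ take $X=\{v\}$; so assume $G$ is $2$-connected. Then $n\ge 4$, and since $2e(G)\le 4n-8<4n$ there is a vertex $v$ with $\deg(v)\in\{2,3\}$.

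\medskip
\noindent\emph{Reducing at a low-degree vertex.} Suppose first $\deg(v)=2$, say $N(v)=\{a,b\}$. If $a\not\sim b$ then $\{a,b\}$ is independent and $G-\{a,b\}$ consists of the isolated vertex $v$ together with $n-3\ge 1$ further vertices, so $\{a,b\}$ is a stable cutset. If $a\sim b$, delete $v$: the graph $G-v$ is connected with $e(G)-2\le 2(n-1)-4$ edges, so it has a stable cutset $X'$ by induction; since $a$ and $b$ are adjacent they never lie in distinct components of $(G-v)-X'$, so re-inserting $v$, whose only neighbours are $a$ and $b$, cannot merge two components and $X'$ still works for $G$. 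Suppose now that $G$ has minimum degree $3$ (no vertex of degree $2$); then $n\ge 8$. Write $N(v)=\{a,b,c\}$. If $\{a,b,c\}$ is independent, then $G-\{a,b,c\}$ has $v$ isolated among $n-4\ge 1$ other vertices, so $\{a,b,c\}$ is a stable cutset. Otherwise some pair, say $a\sim b$, is an edge; then $v$ and $a$ have $b$ as a common neighbour, so contracting the edge $va$ destroys at least two edges, and the contracted graph $G'$ has $n-1$ vertices and at most $e(G)-2\le 2(n-1)-4$ edges. By induction $G'$ has a stable cutset, and the task is to pull it back to $G$.

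\medskip
\noindent\emph{The main obstacle.} The crux is this last lift across the contraction of $va$. If the contracted vertex $w$ is not in the stable cutset $X'$ of $G'$, then $X'$ works verbatim in $G$: ``splitting'' $w$ back into the adjacent pair $v,a$ keeps the component that contained $w$ connected and changes no other component. The difficulty is when $w\in X'$: one must then replace $w$ by $v$ or by $a$, and a priori either choice can reconnect $G-X'$, since re-inserting $v$ (say) joins together whichever components of $G'-X'$ contain the surviving neighbours of $v$. Taming this --- by a careful choice of the contracted edge, or of the replacement vertex, or by exhibiting an altogether different stable cutset when both obvious choices fail, exploiting the adjacencies inside $N(v)$ and the abundance of degree-$3$ vertices --- is precisely where Chen and Yu's argument does its work and where its elegance lies. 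One useful shortcut throughout: whenever $G$ has a $2$-cut $\{x,y\}$ with $xy\notin E(G)$, the set $\{x,y\}$ is already a stable cutset, so one may assume every $2$-cut is an edge, and then split $G$ along such a cut into overlapping subgraphs $H,H'$, checking by an ``edge-deficiency'' count (the quantities $2|H|-4-e(H)$ and $2|H'|-4-e(H')$ cannot both be negative) that one side still satisfies the bound $e\le 2n-4$, and recursing there.
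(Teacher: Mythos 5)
The paper does not prove this theorem; it cites Chen and Yu~\cite{CY02} and takes the result as given, so there is no internal argument to compare your proposal against. Your extremal construction for the second assertion is correct and is exactly the $k=2$ specialisation of the graph in \autoref{fig:ell_leq_k}: it has $2(n-2)+1 = 2n-3$ edges, and every separator must contain both endpoints of the edge $ab$. The preliminary reductions in the positive direction are also sound: handling disconnected graphs and cutvertices, deducing the existence of a vertex $v$ of degree 2 or 3 from the edge bound, the subcases where $N(v)$ is independent, and the degree-2 case with adjacent neighbours (where deleting $v$ and applying induction is harmless because $a\sim b$ prevents $v$ from merging two components).

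However, the proposal is not a proof. In the surviving case --- $G$ 2-connected of minimum degree 3, $v$ with $N(v)=\{a,b,c\}$ and an edge $ab$ --- you contract $va$ to obtain $G'$, apply induction to get a stable cutset $X'$ of $G'$, and then must lift $X'$ back to $G$. You correctly identify that when the contracted vertex $w$ lies in $X'$, neither replacement $X'\setminus\{w\}\cup\{v\}$ nor $X'\setminus\{w\}\cup\{a\}$ is guaranteed to leave $G$ disconnected: the reinstated vertex ($a$ in the first case, $v$ in the second) may have neighbours in several components of $G'-X'$ and reconnect them. You then state explicitly that overcoming this ``is precisely where Chen and Yu's argument does its work,'' which is an admission that the central step is missing. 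The closing remark about 2-cuts (assume every 2-cut is an edge, split along it, recurse on the side whose edge deficiency $2|V|-4-e$ is non-negative) is a valid reduction, but it only removes 2-cuts; in a 3-connected graph the contraction obstacle remains untouched. As written, the proposal sketches a plausible strategy and correctly locates the difficulty, but leaves the key inductive step unresolved, so it does not establish the first assertion.
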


The extremal graphs with $2n-3$ edges and no stable cut were first characterized by Le and Pfender \cite{LP13}, although their proof contained a gap later filled by Rauch and Rautenbach~\cite{RR24}.

The next case, $k=3$, was investigated by Bogdanov, Neustroeva, Sokolov, Volostnov, Russkin, and Voronov \cite{BNSVRV25}, who formulated the following conjecture.

\begin{conjecture}[\cite{BNSVRV25}]\label{conj:k=3}
Any $n$-vertex graph with fewer than $3n-6$ edges admits a bipartite cut, while some graphs with $3n-6$ edges do not. In particular, $\ell_3=3$.
\end{conjecture}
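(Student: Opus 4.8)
The statement has two halves. For ``some graphs with $3n-6$ edges do not admit a bipartite cut'', I would take the $k=3$ instance of the construction in \autoref{fig:ell_leq_k}: the graph $H_n$ built from a triangle $\{x,y,z\}$ by adding $n-3$ further vertices, each joined to $x$, $y$, $z$ and to nothing else. It has $3+3(n-3)=3n-6$ edges, and for $n\ge 5$ any separator of $H_n$ must contain all of $x,y,z$ (deleting only a proper subset of them leaves a vertex adjacent to everything that remains), hence contains a triangle; so no separator of $H_n$ is bipartite, and $K_4$ handles the case $n=4$. So it remains to prove the forward direction, \emph{that every $n$-vertex graph $G$ with $|E(G)|\le 3n-7$ has a bipartite cut}; as in the cases $k\le 2$ this also gives $\ell_3\ge 3$, hence $\ell_3=3$ together with the general upper bound.

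I would prove the forward direction by contradiction, taking a counterexample $G$ on $n$ vertices with $n$ minimum (the finitely many small $n$ checked directly). Since $\varnothing$ is a bipartite cut of any disconnected graph, $G$ is connected; since any set of at most two vertices induces a bipartite graph, $G$ has no separator of order $\le 2$, so $G$ is $3$-connected and $n\ge 5$. The main local observation is that for every vertex $v$ with $\deg(v)\le n-2$, the set $N(v)$ is a separator of $G$ (deleting it isolates $v$ and leaves at least one further vertex), hence $G[N(v)]$ is non-bipartite. In particular $\delta(G)\ge 3$, and if $\deg(v)=3$ then $G[N(v)]$ is a triangle, i.e.\ $N[v]$ induces a $K_4$.

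Next I would remove the degree-$3$ vertices. If $v$ has degree $3$ and $N[v]=\{v,a,b,c\}$ is a $K_4$, then $G-v$ has $n-1$ vertices and at most $3(n-1)-7$ edges, so by minimality it has a bipartite cut $X$; and this $X$ is a bipartite cut of $G$ as well. Indeed $G[X]=(G-v)[X]$ is bipartite since $v\notin X$, and in $G-X$ the surviving neighbours of $v$, namely $\{a,b,c\}\setminus X$, are pairwise adjacent and therefore lie in a single component of $(G-v)-X$, so re-attaching $v$ cannot merge components and $G-X$ stays disconnected — contradiction. Hence $\delta(G)\ge 4$.

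The case $\delta(G)\ge 4$ is where I expect the real difficulty to lie. From $|E(G)|\le 3n-7$ one gets $\sum_v (6-\deg v)\ge 14$ (the extremal graphs above have total exactly $12$), so $G$ has at least seven vertices of degree $4$ or $5$; for such a $v$ the graph $G[N(v)]$ is non-bipartite on $4$ or $5$ vertices, hence contains a triangle, except for the configuration $G[N(v)]=C_5$ with $v$ the hub of a $W_5$, which I would dispatch separately (its rim vertices cannot then have bipartite neighbourhoods). A triangle inside $N(v)$ again puts $v$ in a $K_4$, so essentially every low-degree vertex sits in a $K_4$. The plan is to rerun the deletion argument — deleting a well-chosen low-degree vertex, or a single incident edge (strengthening the induction), or contracting a $K_4$ around it — and to lift a bipartite cut from the smaller graph. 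The obstruction, which I regard as the crux of the whole proof, is that a bipartite cut $X$ of the reduced graph need not survive: if the reduced graph minus $X$ has exactly two components and the re-introduced vertex retains a neighbour in each, re-attaching it reconnects the graph. At degree $3$ the triangle in $N(v)$ prevented exactly this, but a triangle among only three of four or more neighbours does not. To overcome it I would use the $3$-connectivity of $G$ together with Menger's theorem to reroute $X$ so that the re-introduced vertex's surviving neighbours stay together; failing a clean reduction, I would set up a discharging argument with charge $6-\deg(v)$, with the configurations eliminated above as the reducible ones, and show the rules force the total charge to be non-positive, contradicting $\sum_v(6-\deg v)\ge 14$. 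Everything is tight to within one edge ($3n-7$ versus $3n-6$), and this case has essentially the same flavour as Chen and Yu's proof of \autoref{th:k=2}, now complicated by the richer neighbourhoods permitted when $k=3$.
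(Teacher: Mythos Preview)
This statement is a \emph{conjecture} in the paper, not a theorem: the paper gives no proof of it, and indeed the forward direction remains open. The paper only records partial progress from the literature, the current best bound for bipartite cuts being that every $n$-vertex graph with fewer than $\tfrac{80}{31}n-\tfrac{134}{31}$ edges admits one --- still well short of $3n-6$.

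Your treatment of the second half (the extremal examples) is correct and complete: the $k=3$ instance of the construction in \autoref{fig:ell_leq_k} is a $3$-tree with exactly $3n-6$ edges and no bipartite separator, which is precisely the family the paper points to. For the forward direction, your initial reductions are also correct and standard: a minimal counterexample is $3$-connected, every vertex of degree at most $n-2$ has a non-bipartite neighbourhood, and a degree-$3$ vertex can be deleted because its $K_4$-neighbourhood forces the surviving neighbours into a single component of any lifted cut, giving $\delta(G)\ge 4$. But you yourself flag the $\delta(G)\ge 4$ case as ``where I expect the real difficulty to lie'' and from there offer only a plan --- discharging with charge $6-\deg(v)$, rerouting cuts via Menger, contracting local $K_4$'s --- together with an honest statement of the central obstruction, namely that a bipartite cut of the reduced graph need not lift when the re-introduced vertex has surviving neighbours in two components. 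That obstruction is exactly why the conjecture is still open; your sketch does not overcome it, nor does it explain how the proposed discharging rules would force non-positive total charge. So what you have written is a reasonable outline of an attack, consistent in spirit with the partial results the paper cites, but it is not a proof, and there is no proof in the paper to compare it against.
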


The extremal examples here are provided by $3$-trees. In fact, an even stronger conjecture predates this one, namely that under the same conditions one can always find a cut inducing a forest \cite{CRR25}. Partial progress was obtained in the same paper by Chernyshev, Rauch and Rautenbach, who proved that every $n$-vertex graph with fewer than $\tfrac{11}{5}n-\tfrac{18}{5}$ edges admits a forest cut. This bound was subsequently improved to $\tfrac{9}{4}n-\tfrac{15}{4}$ by Botler, Couto, Fernandes, de Figueiredo, Gómez, dos Santos and Sato \cite{BCFdFGdSS25}, and then to $\tfrac{19}{8}n-\tfrac{28}{8}$ by Bogdanov et al.~\cite{BNSVRV25}. In the same work, the authors also established a bound of $\tfrac{80}{31}n-\tfrac{134}{31}$ for bipartite cuts. Related results were later obtained by Cheng, Tang and Zhan \cite{CTZ26}.

Taken together, these cases naturally suggest a bold generalization:

\begin{conjecture}\label{conj:bold}
For every integer $k$ and every graph $G$ on at least $k$ vertices, if
\[
|E(G)| < k |V(G)| - \tfrac{k(k+1)}{2},
\]
then $G$ admits a cut $X$ with $\chi(G[X])<k$. 
In particular, $\ell_k = k$.
\end{conjecture}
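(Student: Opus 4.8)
The natural line of attack is a proof by contradiction via a minimal counterexample, following the pattern of \autoref{th:k=2} and the partial results on forest cuts. Take $G$ to be a counterexample minimizing $k$, then minimizing $n := |V(G)|$, and then maximizing $|E(G)|$; by the $k=1$ case this forces $k \ge 2$, and one checks the statement directly when $n \in \{k, k+1\}$, so $n \ge k+2$. First I would extract the standard structural features. The empty set is a good cut of any disconnected graph on at least $k$ vertices, so $G$ is connected. If some vertex $v$ has $\deg(v) \le n-2$ and $\chi(G[N(v)]) < k$, then $v$ is isolated in $G \setminus N(v)$, so $N(v)$ is a good cut; hence every vertex of $G$ of degree at most $n-2$ has a $k$-chromatic neighbourhood, and in particular $\delta(G) \ge k$. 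A universal vertex $v$ can be deleted: $G - v$ has fewer than $(k-1)(n-1) - \binom{k}{2}$ edges, so by minimality of $k$ it has a cut $X'$ with $\chi((G-v)[X']) < k-1$, and then $X' \cup \{v\}$ is a good cut of $G$; so $G$ has no universal vertex and \emph{every} neighbourhood of $G$ is $k$-chromatic. Finally, if $G$ had at most $kn - \binom{k+1}{2} - 2$ edges then adding any missing edge (possible since $G$ is far from complete) would keep the hypothesis and, by maximality of $|E(G)|$, produce a good cut $X$, which would also be a good cut of $G$; hence $|E(G)| = kn - \binom{k+1}{2} - 1$, exactly one edge short of a $k$-tree.

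With these in hand, the plan is to derive a contradiction by a reduction at a low-degree vertex. Since the average degree of $G$ is strictly below $2k$, there is a vertex $v$ with $d := \deg(v) \le 2k-1$, and $G[N(v)]$ is a graph on at most $2k-1$ vertices with chromatic number at least $k$; such a graph is tightly constrained — it contains a $k$-critical subgraph and hence at least $\binom{k}{2}$ edges. I would try to delete $v$ (or, failing that, contract a carefully chosen edge inside $N(v)$) to obtain a graph $G'$ on $n-1$ vertices with $|E(G')| < k(n-1) - \binom{k+1}{2}$, invoke minimality of $n$ to get a good cut $X'$ of $G'$, and lift it back. The lifting is the delicate part: if every neighbour of $v$ lying outside $X'$ belongs to a single component of $G' \setminus X'$ then $X'$ already works for $G$; otherwise one must add $v$ to $X'$, which keeps the chromatic number below $k$ only if $G[X']$ has a proper $(k-1)$-colouring in which $N(v) \cap X'$ misses a colour. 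So the reduction must be run together with a judicious choice of $X'$ — for instance one minimizing $|N(v) \cap X'|$, or one whose colouring has a free colour on $N(v)$ — and an extra layer of case analysis is needed to cope with the contraction variant and with neighbourhoods of $v$ that are themselves $k$-chromatic but sparse (for $k=3$, already the case $G[N(v)] = C_5$ is not handled, which is exactly why \autoref{conj:k=3} remains open).

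The step I expect to be the main obstacle — and, I suspect, the one that genuinely cannot be completed — is precisely this reduction. For $k \le 2$ it is easy because a low-degree vertex has a neighbourhood on at most three vertices, leaving essentially no structure to fight; but for $k \ge 4$ the neighbourhood of a degree-$(2k-1)$ vertex can be any $k$-chromatic graph on up to $2k-1$ vertices, and there seems to be no local move that simultaneously lowers $n$, keeps the edge count below the $k$-tree threshold, and preserves the absence of a good cut. Pushing on this point suggests the obstruction is real rather than a defect of the method: if one can wire together many small $k$-chromatic, densely interconnected gadgets into a graph of average degree far below $2k$ in which every separator remains $k$-chromatic, then \autoref{conj:bold} is false. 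That is indeed the conclusion of this paper, so the right response to the obstacle is not to repair the induction but to build such a family, which is what produces the bound $\ell_k \le (1+\varepsilon)\tfrac{k}{2}$.
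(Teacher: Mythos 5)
You were not asked to prove a theorem here: \autoref{conj:bold} is a conjecture, and the whole point of the paper is that it is \emph{false} for all sufficiently large $k$ (\autoref{th:main}, via \autoref{th:clique-separator}). So no completion of your minimal-counterexample argument can exist, and you are right to stop where you do. Your preliminary reductions are sound as far as they go: connectivity, the fact that every vertex of degree at most $n-2$ has a $k$-chromatic neighbourhood (hence $\delta(G)\ge k$), the deletion of a universal vertex to reduce $k$ to $k-1$, and the edge-maximality argument pinning $|E(G)|$ to $kn-\binom{k+1}{2}-1$. Your diagnosis that the low-degree-vertex reduction is where the method genuinely breaks, and that the right response is to build a sparse family in which every separator stays $k$-chromatic, is exactly the paper's point of view. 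Concretely, the paper's construction takes $\beta$ cliques of size $\alpha=\lceil(1+\eta)k\rceil$ arranged along a path, with consecutive cliques joined by a bipartite graph of one-sided maximum degree $\Delta$ containing no large bi-hole (\autoref{thm:ASSW}); any separator leaves a bi-hole between two consecutive cliques and must therefore contain all but an $8\ln\Delta/\Delta$ fraction of one of them, forcing $\chi(G[X])\ge k$, while the average degree is only about $\alpha+2\Delta\approx(1+\varepsilon)k$, half the conjectured threshold $2k$.

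Two caveats. First, ``I suspect the obstruction is real'' is not an argument: the failure of one induction scheme never refutes a statement, and the actual disproof requires the explicit family above. Your proposal gestures at such a family but does not construct it, so as written it neither proves nor disproves the conjecture. Second, the refutation is only asymptotic: \autoref{conj:k=3} (the case $k=3$) remains open, the paper explicitly asks for the smallest $k$ at which \autoref{conj:bold} fails, and so your structural reductions could still be the beginning of a correct proof for small $k$ even though they cannot succeed in general.
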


The main purpose of this note is to show that \autoref{conj:bold} is in fact far from correct.

\begin{restatable}{theorem}{main}\label{th:main}
For any $\varepsilon>0$ and all sufficiently large $k$, we have
\[
\ell_k \leq (1+\varepsilon)\,\tfrac{k}{2}.
\]
\end{restatable}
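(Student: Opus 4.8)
The plan is to exhibit, for each fixed $\varepsilon>0$ and all large $k$, an infinite family of graphs of average degree less than $(1+\varepsilon)k$ in which every separator has chromatic number at least $k$; such a family directly witnesses $\ell_k\le(1+\varepsilon)\tfrac k2$. The guiding principle is that the cheapest gadget forcing chromatic number $k$ is a clique $K_k$, costing only about $\tfrac k2$ edges per vertex, so the extremal graphs should be assembled out of many near-$K_k$'s; the whole difficulty is gluing these cliques into a connected graph without creating a ``sparse seam'' that some $(k-1)$-colourable set can cut. Concretely I would start from a bounded-degree graph $R$ (say $d$-regular with $d=\lfloor\varepsilon k\rfloor$) on arbitrarily many vertices, chosen with additional pseudorandomness/girth properties to be pinned down later, replace each vertex $u$ of $R$ by a clique $Q_u$ on roughly $k$ vertices, and for every edge $uv$ of $R$ install a (random) perfect matching between $Q_u$ and $Q_v$. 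Every vertex then has degree about $(k-1)+d<(1+\varepsilon)k$, so the density is right, and the chromatic number of the whole graph stays close to $k$. To keep $R$ small enough that the required properties can be obtained by a counting or probabilistic argument, one likely wants a second layer, taking the final infinite family to be $W\square P_m$ for a finite graph $W$ already satisfying $\chi(W)\ge 2k-1$ and having all of its separators $k$-chromatic: the average degree of $W\square P_m$ is arbitrarily close to that of $W$, and its only essentially new separators are two-sided cuts, which are at least as constrained as those of $W$ itself.

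The heart of the argument is the analysis of separators. The first step is a dichotomy: a separator $X$ either contains some clique $Q_u$ entirely --- in which case $\chi(G[X])\ge|Q_u|\ge k$ and we are done --- or it misses at least one vertex of every $Q_u$. In the latter case each $Q_u\setminus X$ is a non-empty clique, hence connected, hence lies entirely on one side of the cut, so $X$ induces a partition of $V(R)$ into two non-empty parts; moreover, for every edge $uv$ of $R$ crossing this partition, $X$ must cover the entire matching between $Q_u$ and $Q_v$, which already forces $|X\cap(Q_u\cup Q_v)|\ge|Q_u|$. The goal is then to show that, once these covering constraints hold along a genuine cut of $R$, the induced subgraph $G[X]$ --- a union of cliques $Q_u\cap X$ of size $<k$ linked by partial matchings along a subgraph of $R$ --- cannot be properly $(k-1)$-coloured. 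This is where the structure of $R$ and of the matchings does the work: along a tree one can always $(k-1)$-colour such a configuration by a Hall-type argument, so $R$ must contribute cycles, and the matchings must be arranged (pseudorandomly) so that any $(k-1)$-colouring is forced to propagate around a cycle of $R$ and reach an inconsistency, ruling out the ``parallel-class'' colourings that would otherwise succeed.

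The main obstacle is precisely this last chromatic step: proving that no $(k-1)$-colourable set separates the graph. In contrast with the cases $k\le 3$, we cannot appeal to sparsity or planarity-type structure of the host graph, because the host is deliberately almost $k$-regular; and an almost $k$-regular graph in which every separator is $k$-chromatic is very rigid --- for instance every vertex neighbourhood is a cut, so it must be a near-clique --- which is exactly the rigidity that makes the gluing delicate and forces the $\Theta(\varepsilon k)$ of extra edges per vertex. I expect the bulk of the work to be a careful probabilistic choice of $R$ and of the matchings (deleting bad short cycles and bad matching patterns) together with a discharging or Hall-type argument on $G[X]$ showing that the covering constraints forced by being a separator are incompatible with a $(k-1)$-colouring; the factor-$\varepsilon$ slack in the degree is exactly what pays for making the seams un-cuttable.
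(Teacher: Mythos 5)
Your high-level setup (replace each vertex of a sparse skeleton by a near-$K_k$ and analyse separators via the dichotomy ``$X$ swallows some $Q_u$ entirely, or each $Q_u\setminus X$ is a nonempty connected clique lying on one side'') matches the paper's, and the density calculation is the same. But the gadget you choose to glue consecutive cliques --- a single perfect matching --- does not work, and I believe no amount of pseudorandomness in $R$ or in the matchings will rescue it.

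Here is the concrete failure. Along any one cut edge $uv$ of $R$, a separator $X$ need only cover the matching between $Q_u$ and $Q_v$. A perfect matching on $k+k$ vertices has an enormous ``bi-hole'': pick any half of $Q_u$, and the matched \emph{complement} in $Q_v$; these two sets span no edge. So $X$ can take $|Q_u\cap X|=|Q_v\cap X|=k/2$ with $Q_u\cap X$ and $Q_v\cap X$ being matching-disjoint. Already in the simplest case where $R$ is a path $u_1\cdots u_\beta$, the set $X=(Q_{u_i}\cap X)\cup(Q_{u_{i+1}}\cap X)$ chosen this way is a genuine separator of $G$, and $G[X]$ is just two disjoint $K_{k/2}$'s, so $\chi(G[X])=k/2$. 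Thus the construction does not achieve $\chi(G[X])\geq k$; the supposed colouring obstruction around cycles of $R$ simply is not there. More generally, your last step --- showing that cliques of size at most $k-1$ linked by partial matchings along a sparse $R$ cannot be $(k-1)$-coloured --- is doomed: via derangements/Hall one can always $(k-1)$-colour such a configuration when $R$ is a tree or a cycle, and a genuine ``Latin-square-type'' obstruction would force $R$ to contain cliques of order larger than $k-1$, which is incompatible with $R$ having degree $O(\varepsilon k)$. In an almost-$k$-regular graph, a matching between blocks is simply too thin a seam.

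The missing idea, which is the heart of the paper's proof, is to replace the matching by a \emph{bi-hole-free} bipartite gadget: a bipartite graph $H$ on parts of size $\alpha\approx(1+\eta)k$, with one side of max degree $\Delta=O_\varepsilon(1)$, in which every pair $(A',B')$ with no crossing edge has $\min(|A'|,|B'|)\leq 8\tfrac{\ln\Delta}{\Delta}\alpha$. Such $H$ exists by a theorem of Axenovich, Sereni, Snyder and Weber. With $H$ glued between consecutive cliques along a path, the same dichotomy now finishes immediately: if $X$ contains no $Q_u$ entirely, then for the index $i$ where the cut happens, $(A_i\setminus X, A_{i+1}\setminus X)$ is a bi-hole of $H$, so one of $A_i\cap X$, $A_{i+1}\cap X$ has size at least $\alpha\bigl(1-8\tfrac{\ln\Delta}{\Delta}\bigr)\geq k$, and this is a \emph{clique} of size $\geq k$ inside $X$. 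No colouring argument is needed, no pseudorandom $R$, no cycles, and no second layer $W\square P_m$ --- the single path of bi-hole-free links already gives arbitrarily large examples. The extra $\Delta$ edges per vertex from the gadget are what cost you the $\varepsilon k$ slack, exactly where your plan spends it too, but the bi-hole-free property, not a colouring rigidity argument, is what makes every separator expensive.
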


While this does not fully determine $\ell_k$, it provides an essentially sharp asymptotic estimate when combined with the lower bound:
\[
    \tfrac{k}{2} \leq \ell_k \leq (1+o(1))\tfrac{k}{2}
\]

Thus we arrive at the following conclusion.

\begin{theorem}\label{th:main2}
    As $k $ grows large, we have $\ell_k \sim \frac{k}2$.
\end{theorem}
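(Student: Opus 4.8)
Granting Theorem~\ref{th:main}, this is immediate. Fix $\varepsilon>0$: the introduction gives $\ell_k\ge\frac k2$ for every $k$, and Theorem~\ref{th:main} gives $\ell_k\le(1+\varepsilon)\frac k2$ for all sufficiently large $k$, so $\frac k2\le\ell_k\le(1+\varepsilon)\frac k2$ eventually; since $\varepsilon$ was arbitrary, $\ell_k/(\frac k2)\to 1$. Thus the content is entirely in Theorem~\ref{th:main}, and the rest of this note sketches how I would attack it.

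Since the admissible values of $\ell$ in the definition of $\ell_k$ form a down-set, to prove $\ell_k\le(1+\varepsilon)\frac k2$ it suffices, for each fixed $\varepsilon>0$ and each large $k$, to exhibit an infinite family of graphs $G$ with $|E(G)|<(1+\varepsilon)\frac k2|V(G)|$ in which every separator $X$ has $\chi(G[X])\ge k$ --- equivalently, $G\setminus X$ is connected whenever $\chi(G[X])\le k-1$. A first, elementary ingredient is a \emph{join amplification}: if every separator of a graph $H$ has chromatic number at least $j$, then every separator of $K_{k-j}+H$ has chromatic number at least $k$. Indeed a separator $X$ of $K_{k-j}+H$ must contain all of $V(K_{k-j})$ --- otherwise a surviving clique vertex is adjacent to all of $H$ and reconnects everything --- so $X\cap V(H)$ separates $H$, whence $\chi\big((K_{k-j}+H)[X]\big)=(k-j)+\chi\big(H[X\cap V(H)]\big)\ge(k-j)+j=k$. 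With $j=0$ and $H=\overline{K_N}$ this is precisely the example of Figure~\ref{fig:ell_leq_k}; the snag is that a complete join of $K_{k-j}$ to a large $H$ raises the average degree by about $2(k-j)$, so building up $k$ units of chromatic number this way costs average degree $\approx 2k$ --- the bound we already know, and twice what we want.

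The plan is therefore to (i) replace the complete join by a sparser ``robust attachment'': attach the extra clique (or, better, a $k$-critical graph of average degree far below that of $K_k$, such as a large clique fused with a short odd-cycle gadget) to only a carefully chosen, robustly connected part of $H$, designed so that the chromatic lower bound on separators survives while the added average degree is $\approx\frac k2$ per unit of $\chi$ rather than $2k$; (ii) produce a ``seed'' graph with the all-separators-$k$-chromatic property and average degree $\approx k$, and bootstrap from it; and (iii) check the final edge count, which is where the largeness of $k$ is used. Step~(i)--(ii) is where I expect the real difficulty. The structural obstacle is that $\kappa(G)\le\delta(G)\le$ average degree $<(1+\varepsilon)k$, so minimal separators of such a $G$ have only $\approx k$ vertices and hence are forced to be essentially $k$-cliques; yet every construction in which the obstruction is literally a fixed $K_k$ --- a $k$-tree, the ``cycle of cliques'' $C_N[K_k]$, the complete multipartite graph $T(N,k+1)$, \dots --- has average degree at least $2k$, typically because preventing a cheap cut forces complete joins somewhere. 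Making the $k$-chromatic obstruction genuinely spread out, so that it is reused economically while still no $(k-1)$-colourable set can be a separator, is the crux of the matter, and is what I would spend the bulk of the argument on.
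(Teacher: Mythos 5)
Your deduction of Theorem~\ref{th:main2} from Theorem~\ref{th:main} together with the trivial lower bound $\ell_k \geq k/2$ is correct, and it is exactly the paper's route; as a proof of the stated theorem, granting Theorem~\ref{th:main}, this part is complete.

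You are right, though, that all the substance is in Theorem~\ref{th:main}, and your sketch of it has a genuine gap. You correctly diagnose the obstacle: any construction that forces one fixed $k$-clique into \emph{every} separator pays for a complete join and hence average degree about $2k$. But the paper's resolution is not a ``thinned join'' or ``robust attachment'' of a single critical gadget; it makes the obstruction \emph{distributed}. Take $\beta$ disjoint cliques $A_1,\dots,A_\beta$, each of size $\alpha=\lceil(1+\eta)k\rceil$, and connect consecutive pairs $A_i,A_{i+1}$ by a fixed bipartite graph $H$ in which every left vertex has degree at most $\Delta$ and which contains no bi-hole of size $8\tfrac{\ln\Delta}{\Delta}\alpha$ (such $H$ exist, by Axenovich, Sereni, Snyder and Weber~\cite{ASSW21}). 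Because each $A_i$ is a clique, any separator $X$ must, for some $i$, leave no edge between $A_i\setminus X$ and $A_{i+1}\setminus X$; this is a bi-hole in $H$, so one of these two sets has size at most $8\tfrac{\ln\Delta}{\Delta}\alpha$, and hence $X$ contains a clique of size at least $\bigl(1-8\tfrac{\ln\Delta}{\Delta}\bigr)\alpha\geq k$. The average degree is about $\alpha-1+2\Delta\approx k$, since the between-clique gluings are sparse, and $\beta$ can be taken arbitrarily large. The missing ingredient in your plan is precisely this bi-hole extremal lemma: it is what allows the complete join to be replaced by a sparse bipartite graph that still has no large ``independent rectangle'', so that the clique obstruction survives every possible separator without costing $2k$ in average degree.
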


In fact, we prove a stronger statement:

\begin{theorem}\label{th:clique-separator}
    For every integer $k$, there exist arbitrarily large graphs with average degree~${(1 + o(1))k}$ in which every separator contains a clique of size $k$.
\end{theorem}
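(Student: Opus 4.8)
The plan is to exhibit such graphs by an explicit construction. I would build $G$ on a vertex set covered by many vertex–disjoint copies $Q_1,\dots,Q_m$ of $K_{k+1}$ (plus, if convenient, a bounded number of pendant ``simplicial'' vertices attached to a fixed $K_k$–face of each $Q_i$), glued together by a sparse auxiliary structure $H$. The reason to use $K_{k+1}$ rather than $K_k$ as the building block is a rigidity we want to exploit: any $K_k$–free set $X$ contains at most $k-1$ vertices of each $Q_i$, so in $G\setminus X$ every $Q_i$ keeps at least two survivors, which span an edge. The auxiliary $H$ should be taken $d$–regular with $d=o(k)$ (e.g.\ pseudorandom, or an explicit expander), added on top of the cliques. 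Since each vertex then lies in essentially one $K_{k+1}$ (contributing $k$ to its degree) plus its $d$ auxiliary edges, and pendants have degree exactly $k$, the average degree is $k+d+o(1)=(1+o(1))k$, while $m$ — hence $|V(G)|$ — is unbounded. Note this graph is deliberately \emph{not} chordal: a chordal graph with every minimal separator a clique of size $\ge k$ is essentially a $k$–tree and has average degree $\ge 2k-o(1)$, so some non-chordal mechanism is unavoidable.

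The content is the verification that every separator contains a $K_k$, i.e.\ that $G\setminus X$ is connected whenever $X$ is $K_k$–free. Fix such an $X$. Each $Q_i$ has $\ge 2$ survivors, spanning a clique; contract the survivor set of each $Q_i$ to a single super-vertex to obtain a quotient (multi)graph $\widehat G$ on the $m$ cliques, whose edges are the surviving auxiliary edges between survivors of distinct $Q_i$'s. Any pendant of $Q_i$ still has a neighbour in $Q_i$ (as $X$ misses a vertex of the relevant $K_k$–face), so it suffices to prove $\widehat G$ is connected. This reduces the theorem to a robustness statement about $H$: \emph{no matter how the adversary deletes up to $k-1$ vertices from each $Q_i$} — thereby fixing which $\ge 2$ survivors of each clique remain, and which auxiliary edges between survivors persist — the resulting quotient stays connected.

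I expect this robustness statement to be the main obstacle. The adversary is unusually strong: it may delete almost every vertex of every $Q_i$, leaving only two survivors per clique, and it may pick these survivors to be exactly the vertices of $Q_i$ whose auxiliary edges are least useful. So $H$ must be simultaneously sparse ($o(k)$ extra degree) and robust against this adversarial thinning; I would aim to establish quotient-connectivity either by a union bound over the (finitely many) adversarial choices against a pseudorandom $H$, or by an explicit algebraic/expander $H$ for which connectivity can be checked directly. A particular subtlety that the design must handle is the ``local escape'': the adversary deletes $Q_i\setminus\{a,b\}$ (that is $k-1$ vertices) and then tries to seal $a$ and $b$ off by deleting all their remaining neighbours; the construction must force any such sealing set to contain a further $K_k$ — for instance by ensuring that, for every clique $Q_i$, there is a vertex adjacent to all of $Q_i$, so that deleting the neighbourhood of a survivor necessarily drags in $k$ mutually adjacent vertices and $X$ fails to be $K_k$–free after all.

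Finally one should check that no separator of a genuinely different type has been overlooked, but this is automatic: for any vertex $v$ of a large such $G$, $N(v)$ would be a separator if it had clique number $<k$ (as $v$ is isolated in $G\setminus N(v)$), so the target property forces $N(v)\supseteq K_k$ and hence forces every vertex into a $K_{k+1}$. Thus the clique-covered structure above is not a convenience but the only possibility, and the analysis of $K_k$–free sets $X$ via the quotient $\widehat G$ is exhaustive.
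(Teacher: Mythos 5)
Your plan shares the paper's skeleton --- cliques glued by a sparse auxiliary structure, with a $K_k$-free separator analysed through the survivors it leaves in each clique --- but the choice of $K_{k+1}$ as the building block is the step that fails, for a quantitative reason that no bounded local patch can repair. With $Q_1 \cong K_{k+1}$, a $K_k$-free $X$ may delete $k-1$ vertices of $Q_1$, leaving exactly two survivors $a,b$; it then only needs to absorb $N_H(a)\cup N_H(b)$, at most $2d=o(k)$ further vertices, to isolate $\{a,b\}$. Since each such absorbed vertex has only $d=o(k)$ auxiliary neighbours in $Q_1$, none of them is adjacent to all of $Q_1\setminus\{a,b\}$, so the set $X=(Q_1\setminus\{a,b\})\cup N_H(a)\cup N_H(b)$ has clique number $k-1$: it is a $K_k$-free separator, and the construction fails. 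Your proposed fix --- a vertex $v_1$ adjacent to all of $Q_1$ --- does rule out this particular $X$, since $\{v_1\}\cup(Q_1\setminus\{a,b\})$ is then a $K_k$, but $Q_1\cup\{v_1\}$ is now a $K_{k+2}$ from which the adversary keeps three survivors, and the identical escape re-runs one size larger. No bounded number of such reinforcements terminates the recursion; any of them still leaves only $O(1)$ survivors per clique, which can be sealed off by $o(k)$ further $K_k$-free deletions.

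The paper's way out is precisely the parameter change your fix was converging to in the limit: take cliques $A_i$ of size $\alpha = \lceil(1+\eta)k\rceil$ rather than $k+O(1)$, arranged in a path, with each consecutive pair $A_i, A_{i+1}$ joined by the bounded-degree bipartite graph of Axenovich, Sereni, Snyder and Weber that has no bi-hole of size $8\tfrac{\ln\Delta}{\Delta}\alpha$. Because $\alpha\bigl(1 - 8\tfrac{\ln\Delta}{\Delta}\bigr) \geq k$, a $K_k$-free $X$ must leave more than $8\tfrac{\ln\Delta}{\Delta}\alpha$ survivors in every clique, so every consecutive pair of survivor sets is too large to be a bi-hole and therefore spans an edge, and $G\setminus X$ is connected. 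The robustness of $H$ that you correctly identify as ``the main obstacle'' is only attainable at $o(k)$ auxiliary degree when each clique is guaranteed $\Theta(k)$ survivors --- that is, when the clique size exceeds $k$ by $\Theta(k)$, not by $O(1)$. Your $K_{k+1}$ cliques guarantee only two.
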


This construction is interesting in its own right, and appeared in~\cite{bessy2025sparse} where it was used to establish lower bounds on the smallest maximum degree of a cut (instead of its chromatic number). Since the chromatic number is always at most the degeneracy plus one,~\autoref{th:main} also rules out the strengthening of~\autoref{conj:bold} where $\chi(G[X])<k$ is replaced by the requirement that $G[X]$ be $(k-2)$-degenerate. 
This would have tied in neatly with the already studied cases:
\begin{itemize}
    \item the only \text{-}1-degenerate graph is the empty one,
    \item a $0$-degenerate graph is stable,
    \item a $1$-degenerate graph is a forest.
\end{itemize}

Thus, for $k=1,2$, this is equivalent to the standard definition using chromatic number, while with $k=3$ we retrieve the well-studied notion of forest-cuts. We note that the condition $\chi(G[X])<k$ seems easier to work with when attempting to obtain positive results, as it is compatible with identifying a stable set into a single vertex\footnote{For any smallest graph $G$ with $\ell |V(G)|-|E(G)|>c$ and no cut of chromatic number less than $k$, we obtain that every subset $X$ of vertices is either a clique or satisfies $\ell |X|-|E(G[X])|>\ell t - \frac{t(t-1)}2$, where $t=\chi(G[X])$.}.

\section{Proofs}\label{sec:proofs}

In a bipartite graph $G = (A \cup B, E)$, a \emph{bi-hole of size $k$} is a pair $(A', B')$ with $\min\{|A'|,|B'|\} = k$, $A' \subseteq A$, $B' \subseteq B$, such that there is no edge between $A'$ and $B'$.
In some sense, the size of a largest bi-hole in a bipartite graph corresponds to the ``bipartite independence number'' of that graph.
Axenovich, Sereni, Snyder, and Weber \cite{ASSW21} studied the following question: what is the largest integer $f(n, \Delta)$ such that every $n \times n$ bipartite graph $G = (A \cup B, E)$ with~$\deg(a) \leq \Delta$ for every vertex $a \in A$ contains a bi-hole of size $f(n, \Delta)$?
They proved that the asymptotic behaviour of the function $f(n, \Delta)$ is $\Theta\left(\frac{\ln\Delta}{\Delta} \cdot n\right)$. We make use of the following upper bound.

\begin{theorem}[\cite{ASSW21}] \label{thm:ASSW}
    Let $\Delta \geq 27$ be an integer and $n \geq \frac{\Delta}{\ln\Delta}$. Then, there exists an $n \times n$ bipartite graph $G = (A \cup B, E)$ with $\deg(a) \leq \Delta$ for every vertex $a \in A$, which contains no bi-hole of size at least $8 \cdot \frac{\ln\Delta}{\Delta} \cdot n$.
\end{theorem}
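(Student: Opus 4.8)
The plan is a direct application of the probabilistic method: exhibit a random bipartite graph that obeys the one-sided degree bound by construction, and show via a first-moment computation that with positive probability it contains no large bi-hole. Set $k \eqdef \lceil 8\,\tfrac{\ln \Delta}{\Delta}\,n\rceil$, so that having no bi-hole ``of size at least $8\tfrac{\ln\Delta}{\Delta}n$'' is the same as having no bi-hole of size at least $k$. The first reduction is that it suffices to forbid \emph{square} bi-holes: if $(A',B')$ is a bi-hole with $\min\{|A'|,|B'|\}\ge k$, then shrinking both parts to size exactly $k$ produces a bi-hole with $|A'|=|B'|=k$ and still no edges between. So the target is a bipartite graph $G=(A\cup B,E)$ with $|A|=|B|=n$ and $\deg(a)\le\Delta$ for all $a\in A$ such that every $A'\subseteq A$ and $B'\subseteq B$ with $|A'|=|B'|=k$ span at least one edge. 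We may assume $\Delta\le n$, since otherwise $K_{n,n}$ already works (it has no bi-hole of positive size); together with $\Delta\ge 27$ and $n\ge\Delta/\ln\Delta$ this yields $8\le k\le n$, so $k$ is a meaningful size.

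The key step is to choose the random model so that the degree constraint holds deterministically. The Erd\H{o}s--R\'enyi model $G(n,n,\Delta/n)$ is tempting but awkward here: each $A$-vertex would have degree $\Delta$ only in expectation, and the $\Theta(n)$ vertices exceeding $\Delta$ cannot be controlled by Chernoff bounds once $n\gg\Delta$. Instead, I would let each $a\in A$ independently choose its neighbourhood $N(a)$ uniformly among the $\Delta$-element subsets of $B$; then $\deg(a)=\Delta$ for every $a$. For fixed $k$-sets $A'\subseteq A$, $B'\subseteq B$, the events $\{N(a)\cap B'=\emptyset\}$ for $a\in A'$ are independent, each with probability
\[
\frac{\binom{n-k}{\Delta}}{\binom{n}{\Delta}}\;=\;\prod_{j=0}^{\Delta-1}\frac{n-k-j}{n-j}\;\le\;\Bigl(1-\tfrac{k}{n}\Bigr)^{\Delta}\;\le\;e^{-k\Delta/n},
\]
so $\Pr[(A',B')\text{ is a bi-hole}]\le e^{-k^2\Delta/n}$ (and it is $0$ if $k+\Delta>n$).

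I would then close with a union bound over the $\binom{n}{k}^2$ choices of $(A',B')$: the expected number of square bi-holes is at most
\[
\binom{n}{k}^{2}e^{-k^2\Delta/n}\;\le\;\Bigl(\tfrac{en}{k}\Bigr)^{2k}e^{-k^2\Delta/n}\;=\;\exp\!\Bigl(k\bigl(2\ln\tfrac{en}{k}-\tfrac{k\Delta}{n}\bigr)\Bigr),
\]
which is below $1$ once $2\ln\tfrac{en}{k}<\tfrac{k\Delta}{n}$. Here $k\ge 8\tfrac{\ln\Delta}{\Delta}n$ gives $\tfrac{k\Delta}{n}\ge 8\ln\Delta$, while $\tfrac{en}{k}\le\tfrac{e\Delta}{8\ln\Delta}\le e\Delta$ gives $2\ln\tfrac{en}{k}\le 2+2\ln\Delta$, and $2+2\ln\Delta<8\ln\Delta$ for every $\Delta\ge 2$. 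Thus the expectation is strictly less than $1$, so some outcome of the construction has no square bi-hole — hence no bi-hole of size at least $k$ — while satisfying $\deg(a)\le\Delta$ for all $a\in A$; this is the required graph.

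The only step requiring genuine care is the choice of the random model: one needs a distribution that enforces the one-sided degree bound \emph{exactly} yet keeps the single-pair probability small enough that the union bound's entropy term $2k\ln(en/k)$ is dominated by the covering gain $k^2\Delta/n$. The uniform-$\Delta$-subset model achieves both cleanly, and the generous constant $8$ (together with the hypothesis $\Delta\ge 27$, which is also what secures $k\le n$) leaves so much slack in the final inequality that no delicate optimisation is needed; everything else reduces to standard estimates for binomial coefficients.
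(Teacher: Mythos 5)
Your argument is correct and complete, but it follows a different route from the one the paper relies on: the paper simply cites~\cite{ASSW21} and sketches the construction as a binomial random bipartite graph $G(2n,2n,\Delta/(4n))$, where the one-sided degree bound is obtained \emph{a posteriori} by restricting to $n$ vertices of degree at most $\Delta$ (Markov's inequality guarantees enough such vertices, since the expected degree is $\Delta/2$, and the bi-hole-freeness is inherited by induced subgraphs). You instead work directly on an $n\times n$ graph in the uniform $\Delta$-subset model, so the degree condition holds deterministically and no restriction step is needed; the price is a short case split ($\Delta>n$ handled by $K_{n,n}$) and the hypergeometric estimate $\binom{n-k}{\Delta}/\binom{n}{\Delta}\le e^{-k\Delta/n}$, both of which you handle correctly, as are the reduction to square bi-holes, the verification that $8\le k\le n$ under $\Delta\ge 27$ and $n\ge\Delta/\ln\Delta$, the independence of the events $\{N(a)\cap B'=\emptyset\}$ across $a\in A'$, and the final first-moment computation $2\ln(en/k)\le 2+2\ln\Delta<8\ln\Delta\le k\Delta/n$. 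Your version buys a self-contained proof (with positive probability rather than w.h.p., which is all that is needed) and avoids the mild bookkeeping of the restriction step, namely checking that the threshold $8\tfrac{\ln\Delta}{\Delta}n$ is the right one for the ambient $2n\times 2n$ graph; the paper's sketch, following~\cite{ASSW21}, stays within the more standard binomial model at the cost of that extra selection argument.
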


Such a graph can be obtained with high probability from a random bipartite graph $G(2n, 2n, \Delta/(4n))$ by restricting one part to $n$ vertices of degree at most $\Delta$ and the other part to any set of $n$ vertices.

We now prove \autoref{th:main}, which we restate for convenience.

\main*

\begin{proof}
    Fix $\varepsilon > 0$, set $\eta \coloneqq \varepsilon/2$ and let $\Delta \geq 27$ be large enough so that $1 - 8 \cdot \frac{\ln\Delta}{\Delta} \geq \frac{1}{1+\eta}$.
    Let $k$ be an integer large enough so that $\eta k \geq 2\Delta$ and $(1+\eta) k \geq \frac{\Delta}{\ln\Delta}$.
    Set $\ell \coloneqq (1+\varepsilon) \frac{k}{2}$.
    To show that $\ell_k \leq \ell$, it suffices to prove that there exist arbitrarily large graphs~$G$ with average degree less than $2\ell$ and where every separator $X \subseteq V(G)$ of $G$ satisfies $\chi(G[X]) \geq k$.

    Set $\alpha \coloneqq \lceil(1+\eta) k\rceil \geq \frac{\Delta}{\ln\Delta}$, and let $\beta \geq 2$ be an integer.
    By \autoref{thm:ASSW}, there exists an~$\alpha \times \alpha$ bipartite graph $H = (A \cup B, E)$ with $\deg(a) \leq \Delta$ for every vertex $a \in A$, which contains no bi-hole of size at least $8 \cdot \frac{\ln\Delta}{\Delta} \cdot \alpha$.
    Consider the graph $G$ whose vertex set is the union of $\beta$ pairwise disjoint sets  $A_1, \ldots, A_\beta$ of $\alpha$ vertices each, and whose edges are exactly such that:
    \begin{itemize}
        \item for every $i \in [1,\beta]$, the graph $G[A_i]$ is a clique, and
        \item for every $i \in [1,\beta-1]$, the semi-induced subgraph $G[A_i, A_{i+1}]$ is isomorphic to $H$, with~$A_i$ mapped to the part $A$ of $H$ and $A_{i+1}$ to the part $B$ of $H$.
    \end{itemize}

    \begin{claim}
        Every separator $X \subseteq V(G)$ of $G$ satisfies $\chi(G[X]) \geq k$.
    \end{claim}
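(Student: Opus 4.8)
The plan is to show that any subset $X \subseteq V(G)$ inducing a graph of chromatic number less than $k$ is ``too small'' in a suitable sense to disconnect $G$. Write $X_i \coloneqq X \cap A_i$. Since $G[A_i]$ is a clique, any proper colouring of $G[X]$ uses at least $|X_i|$ colours on $X_i$, and colour classes on $A_i$ are disjoint from colour classes on $A_{i+2}$ only through the intermediate layer; the key local observation is that a proper colouring restricted to $G[X_i, X_{i+1}]$ must have every colour class contained in a ``side'' that sees no edge to the other side's class. Concretely, I would first argue: if $\chi(G[X]) < k$, then for each consecutive pair $(X_i, X_{i+1})$, either $|X_i| < k$ or $|X_{i+1}| < k$, since otherwise — using that $G[A_i]$ and $G[A_{i+1}]$ are cliques — the colour classes would force a bi-hole. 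More precisely, fix a proper colouring with fewer than $k$ colours; on $A_i \cup A_{i+1}$ the colour classes are independent sets, hence each meets at most one of $A_i$, $A_{i+1}$ in more than one vertex is false — rather, each colour class split as $(c \cap X_i, c \cap X_{i+1})$ has no edge between its two parts. Summing, the sets $\{c \cap X_i\}$ and $\{c \cap X_{i+1}\}$ of nonempty parts, paired by colour, cannot both be large: if $|X_i| \ge k$ and $|X_{i+1}| \ge k$, then among the $<k$ colour classes, the parts meeting $X_i$ form a partition of $X_i$ into $<k$ pieces and similarly for $X_{i+1}$, and one extracts $A' \subseteq X_i$, $B' \subseteq X_{i+1}$ with $|A'|,|B'|$ at least roughly $\alpha/k \cdot$ (something) and no edges between them, contradicting the bi-hole bound once the numbers are chosen as in the setup ($\alpha = \lceil(1+\eta)k\rceil$, $\eta k \ge 2\Delta$, and $1 - 8\tfrac{\ln\Delta}{\Delta} \ge \tfrac{1}{1+\eta}$).

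Granting that lemma, the structure of $G$ takes over: the layers $A_1, \dots, A_\beta$ are linearly ordered, and $X$ is a separator only if removing $X$ disconnects $G$. But if for every consecutive pair at least one of $X_i, X_{i+1}$ is ``small'' — in fact I expect the right statement is even stronger, that $X$ must contain \emph{all} of some $A_i$ to separate, because each $A_i$ is a clique and the bi-hole condition forces $G[A_i, A_{i+1}]$ to have no large bi-hole, hence to be quite dense and ``connected through'' — then $G \setminus X$ stays connected. The cleanest route: show that if $X_i \ne A_i$ and $X_{i+1} \ne A_{i+1}$ for all $i$, then $G \setminus X$ is connected, by exhibiting for each $i$ an edge between $A_i \setminus X_i$ and $A_{i+1} \setminus X_{i+1}$ (which exists because otherwise $(A_i \setminus X_i, A_{i+1} \setminus X_{i+1})$ would be a bi-hole, and each side has size $> \alpha - k \ge 8\tfrac{\ln\Delta}{\Delta}\alpha$ by the choice of parameters), and using that each $A_i \setminus X_i$ is a nonempty clique, hence connected. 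Thus $X$ separates only if some $A_i \subseteq X$, but then $\chi(G[X]) \ge \chi(G[A_i]) = \alpha \ge (1+\eta)k > k$, and we are done — except we must also handle the case where some $|X_i| \ge k$: combining with the lemma this can only happen if the neighbouring $X_{i\pm1}$ are small, so connectivity through those neighbours is preserved and $X$ still fails to separate.

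Finally I would assemble the two regimes: either every $X_i$ has $|X_i| < k$, in which case no $A_i$ is fully contained in $X$ and the connectivity argument above shows $X$ is not a separator — so this case is vacuous for a would-be separator; or some $|X_{i_0}| \ge k$, and then by the lemma $|X_{i_0-1}|, |X_{i_0+1}| < k$, so none of $A_{i_0-1}, A_{i_0+1}$ (nor, propagating, any $A_i$ with $i$ of the opposite parity ... ) — here I'd just note that the set of ``heavy'' indices is an independent set in the path on $[\beta]$, so between any two heavy indices there is a light one, through which $G \setminus X$ remains connected by the same bi-hole edge-existence argument; hence $X$ is again not a separator unless it contains some entire $A_i$, giving $\chi(G[X]) \ge \alpha > k$. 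The average degree computation is routine: each $A_i$ contributes $\binom{\alpha}{2}$ edges, each $H$-copy at most $\alpha\Delta$, so $|E(G)| \le \beta\binom{\alpha}{2} + (\beta-1)\alpha\Delta$ on $\beta\alpha$ vertices, giving average degree at most $\alpha - 1 + 2\Delta \le (1+\eta)k + 1 + \eta k = (1+\varepsilon)k + 1 < 2\ell$ for $k$ large, and letting $\beta \to \infty$ yields arbitrarily large such $G$. The main obstacle is the first lemma — correctly converting ``few colours on two adjacent cliques'' into ``large bi-hole'', which requires pinning down that a proper colouring of $G[X_i, X_{i+1}]$ with $t$ colours yields parts of size $\ge |X_i|/t$ and $\ge |X_{i+1}|/t$ that are mutually non-adjacent, and then checking the arithmetic $\alpha/k$ versus $8\tfrac{\ln\Delta}{\Delta}\alpha$ closes with the stated parameter choices; everything after that is bookkeeping about paths and cliques.
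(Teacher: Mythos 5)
Your proposal has the right skeleton but a flawed middle. The first paragraph proposes a ``lemma'' (for each consecutive pair, at most one of $|X_i|, |X_{i+1}|$ is $\geq k$) and tries to prove it by extracting a large bi-hole between $X_i$ and $X_{i+1}$ from the colour classes of a proper colouring of $G[X]$. That extraction cannot work as described: you yourself observe that ``any proper colouring of $G[X]$ uses at least $|X_i|$ colours on $X_i$'' precisely because $X_i \subseteq A_i$ is a clique, so each colour class meets each $X_i$ in \emph{at most one vertex}. You therefore cannot pull out sets $A' \subseteq X_i$, $B' \subseteq X_{i+1}$ of size ``roughly $\alpha/k$'' from a single colour class — each side would have size at most $1$. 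The detour is self-defeating. But the very observation you made is all you need and is much stronger than your lemma: since $X_i$ is a clique in $G[X]$, $|X_i| \leq \chi(G[X])$, so if $\chi(G[X]) < k$ then $|X_i| \leq k-1$ for \emph{every} $i$. This kills the whole first paragraph and makes the third paragraph's casework (heavy vs.\ light indices) unnecessary.

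With that repair, your second paragraph is correct and is essentially the contrapositive of the paper's argument. You argue: if $|X_i| \leq k-1$ for all $i$, then $|A_i \setminus X| \geq \alpha - k + 1$ for all $i$; each $A_i \setminus X$ is a nonempty clique, and for consecutive $i$ the pair $(A_i \setminus X, A_{i+1} \setminus X)$ cannot be a bi-hole in $H$ since $\alpha - k + 1 > 8\tfrac{\ln\Delta}{\Delta}\alpha$ (the arithmetic checks out: $\alpha - k + 1 \geq \eta k + 1$ while $8\tfrac{\ln\Delta}{\Delta}\alpha \leq \tfrac{\eta}{1+\eta}\alpha \leq \eta k + \tfrac{\eta}{1+\eta} < \eta k + 1$), so there is an edge between every pair of consecutive layers of $G \setminus X$, which is therefore connected — hence $X$ is not a separator. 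The paper takes the direct route: it starts from $X$ being a separator, deduces that some consecutive pair $(A_i \setminus X, A_{i+1} \setminus X)$ has no edge between its parts (since each nonempty $A_i \setminus X$ is connected, the linear layer structure forces this), then uses the no-large-bi-hole property of $H$ to conclude that one of $|A_i \cap X|, |A_{i+1} \cap X|$ is at least $\alpha\bigl(1 - 8\tfrac{\ln\Delta}{\Delta}\bigr) \geq k$, and this set is a clique in $G[X]$. The two are logically the same argument read in opposite directions; the paper's is marginally shorter because it needs only one bi-hole inequality rather than a uniform bound over all $i$, and it avoids spelling out the connectivity of $G \setminus X$. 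Finally, the average-degree computation in your last paragraph belongs to the second claim in the paper, not this one.
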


    \begin{proof}
        Consider a set $X \subseteq V(G)$ such that $G \setminus X$ is disconnected.
        Since each~$G[A_i]$ is a clique, there exists an integer $i \in [1,\beta-1]$ such that there is no edge in $G$ between~$A_i \setminus X$ and $A_{i+1} \setminus X$.
        By construction of $G$, this means that $(A_i \setminus X, A_{i+1} \setminus X)$ is a bi-hole in~$G[A_i, A_{i+1}] \cong H$.
        Therefore, by definition of $H$, we have \[\min\{|A_i \setminus X|, |A_{i+1} \setminus X|\} \leq 8 \cdot \frac{\ln\Delta}{\Delta} \cdot \alpha.\]
        Thus, we have \[\max\{|A_i \cap X|, |A_{i+1} \cap X|\} \geq \alpha\left(1-8 \cdot \frac{\ln\Delta}{\Delta}\right).\]
        Since $G[A_i]$ and $G[A_{i+1}]$ are cliques, we deduce \[{\chi(G[X]) \geq \alpha\left(1-8 \cdot \frac{\ln\Delta}{\Delta}\right) \geq (1 + \eta) k \cdot \frac{1}{1+\eta} \geq k}.\qedhere\]
    \end{proof}

    \begin{claim}
         $G$ has average degree less than $2\ell$.
    \end{claim}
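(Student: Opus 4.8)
The plan is a direct double-counting argument. First I would record the order and size of $G$. By construction $G$ has $\beta$ parts of $\alpha$ vertices each, so $|V(G)| = \beta\alpha$; moreover its edges are exactly those lying inside some $A_i$ or between two consecutive parts $A_i, A_{i+1}$. The former contribute $\beta\binom{\alpha}{2}$ edges in total, while for each $i \in [1,\beta-1]$ the part $G[A_i,A_{i+1}] \cong H$ contributes $|E(H)|$ edges. Since every vertex of the part $A$ of $H$ has degree at most $\Delta$, we have $|E(H)| \le \Delta\alpha$, and therefore $|E(G)| \le \beta\binom{\alpha}{2} + (\beta-1)\,\Delta\alpha$.

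Next I would plug these bounds into the average-degree formula. Writing $d(G) = 2|E(G)|/|V(G)|$ and simplifying,
\[
d(G) \;\le\; \frac{2\beta\binom{\alpha}{2} + 2(\beta-1)\Delta\alpha}{\beta\alpha} \;=\; (\alpha-1) + \frac{2(\beta-1)\Delta}{\beta} \;<\; (\alpha - 1) + 2\Delta,
\]
where the final strict inequality only uses $\beta \ge 2$, so that $(\beta-1)/\beta < 1$. This is the single place where strictness is needed; everywhere else there is room to spare.

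Finally I would convert the bound $(\alpha-1) + 2\Delta$ into one in terms of $k$. From $\alpha = \lceil(1+\eta)k\rceil \le (1+\eta)k + 1$ we get $\alpha - 1 \le (1+\eta)k$, and $k$ was chosen large enough that $2\Delta \le \eta k$; hence
\[
d(G) \;<\; (\alpha-1) + 2\Delta \;\le\; (1+\eta)k + \eta k \;=\; (1+2\eta)k \;=\; (1+\varepsilon)k \;=\; 2\ell,
\]
using $\eta = \varepsilon/2$ and $\ell = (1+\varepsilon)k/2$. This proves the claim, and together with the previous claim it shows that $G$ has average degree less than $2\ell$ and no separator of chromatic number less than $k$; since $\beta \ge 2$ was arbitrary and $|V(G)| = \beta\alpha \to \infty$ as $\beta \to \infty$, such graphs exist on arbitrarily many vertices, so $\ell_k \le (1+\varepsilon)k/2$. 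The computation is routine, so there is no real obstacle: the only points needing a little care are keeping the final inequality strict (handled by the $\beta \ge 2$ slack) and checking that the rounding in the definition of $\alpha$ costs at most the additive constant absorbed by the $2\Delta \le \eta k$ budget.
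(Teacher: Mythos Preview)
Your proof is correct and follows essentially the same direct edge-count as the paper. The only cosmetic differences are that you use the exact $(\beta-1)$ copies of $H$ between consecutive blocks (the paper overcounts with $\beta$), and you obtain the strict inequality from the factor $(\beta-1)/\beta<1$ whereas the paper relies on $\alpha-1<(1+\eta)k$ coming from the ceiling; either source of slack suffices.
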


    \begin{proof}
        In $H$, every vertex $a \in A$ satisfies $\deg(a) \leq \Delta$, so $H$ has at most $\alpha \Delta$ edges.
        Therefore, \[|E(G)| \leq \beta \left(\frac{\alpha(\alpha-1)}{2} + \alpha \Delta\right).\]
        Moreover $|V(G)|=\beta\alpha$. Thus, the average degree of $G$ is 
        \[
        \frac{2|E(G)|}{|V(G)|} \leq \alpha - 1+ 2\Delta < (1 +2\eta) k = (1+\varepsilon) k = 2\ell.
        \]
    \end{proof}
    Since $G$ can be made arbitrarily large by choosing appropriately the value of $\beta$, the two claims conclude the proof.
\end{proof}

\begin{remark}
    In the above proof, we can take $\Delta = \Theta\left(\frac{1}{\varepsilon}\ln\frac{1}{\varepsilon}\right)$ and $k = \Theta\left(\frac{1}{\varepsilon^2}\ln\frac{1}{\varepsilon}\right)$.
\end{remark}

\section{Conclusion}\label{sec:ccl}

We disproved ~\autoref{conj:bold} in a strong form, but only for very large $k$. It would be interesting to establish the smallest $k$ for which~\autoref{conj:bold} strays from the truth, especially if it turns out to be already at $k=3$. It also seems reasonable to believe that the stronger form, requiring a cut to be not only $(k-1)$-colourable but in fact $(k-2)$-degenerate, would break down earlier, maybe indeed for $k=3$. 

\bibliographystyle{alphaurl}
\bibliography{coolnew}

\newcommand{\etalchar}[1]{$^{#1}$}
\begin{thebibliography}{ASSW21}

\bibitem[ASSW21]{ASSW21}
Maria Axenovich, Jean-S\'{e}bastien Sereni, Richard Snyder, and Lea Weber.
\newblock Bipartite independence number in graphs with bounded maximum degree.
\newblock {\em SIAM Journal on Discrete Mathematics}, 35(2):1136--1148, 2021.
\newblock \href {https://doi.org/10.1137/20M1321760} {\path{doi:10.1137/20M1321760}}.

\bibitem[BCF{\etalchar{+}}25]{BCFdFGdSS25}
F\'abio Botler, Yan~S. Couto, Cristina~G. Fernandes, Eder~F. de~Figueiredo, Renzo Gómez, Vinicius~F. dos Santos, and Cristiane~M. Sato.
\newblock Extremal problems on forest cuts and acyclic neighborhoods in sparse graphs, 2025.
\newblock URL: \url{https://arxiv.org/abs/2411.17885}.

\bibitem[BNS{\etalchar{+}}25]{BNSVRV25}
Ilya~I. Bogdanov, Elizaveta Neustroeva, Georgy Sokolov, Alexei Volostnov, Nikolay Russkin, and Vsevolod Voronov.
\newblock On forest and bipartite cuts in sparse graphs, 2025.
\newblock URL: \url{https://arxiv.org/abs/2505.16179}.

\bibitem[BRRS25]{bessy2025sparse}
St{\'e}phane Bessy, Johannes Rauch, Dieter Rautenbach, and U{\'e}verton~S. Souza.
\newblock Sparse vertex cutsets and the maximum degree.
\newblock {\em The Electronic Journal of Combinatorics}, 32(2):2--32, 2025.
\newblock \href {https://doi.org/10.37236/12902} {\path{doi:10.37236/12902}}.

\bibitem[CRR25]{CRR25}
Vsevolod Chernyshev, Johannes Rauch, and Dieter Rautenbach.
\newblock Forest cuts in sparse graphs.
\newblock {\em Discrete Mathematics}, 348(11):114594, 2025.
\newblock \href {https://doi.org/10.1016/j.disc.2025.114594} {\path{doi:10.1016/j.disc.2025.114594}}.

\bibitem[CTZ26]{CTZ26}
Kun Cheng, Yurui Tang, and Xingzhi Zhan.
\newblock Sparse graphs with an independent or foresty minimum vertex cut.
\newblock {\em Discrete Mathematics}, 349(1):114658, 2026.
\newblock \href {https://doi.org/10.1016/j.disc.2025.114658} {\path{doi:10.1016/j.disc.2025.114658}}.

\bibitem[CY02]{CY02}
Guantao Chen and Xingxing Yu.
\newblock A note on fragile graphs.
\newblock {\em Discrete Mathematics}, 249(1–3):41–43, 2002.
\newblock \href {https://doi.org/10.1016/S0012-365X(01)00226-6} {\path{doi:10.1016/S0012-365X(01)00226-6}}.

\bibitem[LP13]{LP13}
Van~Bang Le and Florian Pfender.
\newblock Extremal graphs having no stable cutset.
\newblock {\em The Electronic Journal of Combinatorics}, 20(1):35, 2013.
\newblock \href {https://doi.org/10.37236/2513} {\path{doi:10.37236/2513}}.

\bibitem[RR24]{RR24}
Johannes Rauch and Dieter Rautenbach.
\newblock Revisiting extremal graphs having no stable cutsets, 2024.
\newblock URL: \url{https://arxiv.org/abs/2412.00337}.

\end{thebibliography}

\end{document}